\newtheorem{theorem}{Theorem}[section]
\newtheorem{maintheorem}{Theorem}
\newtheorem{corollary}{Corollary}[section]
\theoremstyle{definition}
\theoremstyle{lemma}
\newtheorem{lemma}{Lemma}[section]
\theoremstyle{remark}
\newtheorem{remark}{Remark}
\theoremstyle{claim}
\theoremstyle{theoremA}
\theoremstyle{theoremB}
\newcommand{\real}{\mathbb{R}}
\newcommand{\la}{\lambda}
\newcommand{\Om}{\Omega}
\newcommand{\na}{\nabla}
\newcommand{\Ga}{\Gamma}
\newcommand{\De}{\Delta}
\newcommand{\vol}{\text{\rm vol}}
\newcommand{\p}{\partial}
\begin{document}


\title[The $p$-Hyperbolicity of Infinity Volume Ends]
{The $p$-Hyperbolicity of Infinity Volume Ends \\ and Applications
}
\author{M. Batista}
\address{Instituto de Matem\'atica, Universidade Fe\-deral de Alagoas, Macei\'o, AL, CEP 57072-970, Brazil}\email{mhbs@mat.ufal.br}
\author{M. P. Cavalcante}
\address{Instituto de Matem\'atica, Universidade Fe\-deral de Alagoas, Macei\'o, AL, CEP 57072-970, Brazil}
\email{marcos@pos.mat.ufal.br}
\thanks{The second author was  partially supported by CNPq}

\author{N. L. Santos}
\address{Departamento de Matem‡tica, Universidade Fe\-deral do Piau\'i, Teresina, PI, CEP 64049-550, Brazil}
\email{newtonls@ufpi.edu.br}
\date{\today}
\subjclass[2010]{Primary 31C12, 58C40; Secondary 47J10, 53C26}

\maketitle

\thispagestyle{empty}
\begin{abstract} In this paper we prove a characterization of $p$-hyperbolic ends
on complete Riemannian manifolds which carries a Sobolev type inequality. 
\end{abstract}

\section{Introduction}

Let $M^n$ be a complete noncompact Riemannian manifold.
Given $p\geq1$,  we recall that the $p$-Laplacian operator on $M$ is defined by
$$
\De_p u  := \textrm{div} (|\nabla u |^{p-2}\nabla u),
$$
for $u\in W^{1,p}_{loc}(M)$. It is  the Euler-Lagrange operator
associated to the $p$-energy functional,
$
E_p(u) := \int_M |\nabla u|^p\, \textrm{d}M.
$
This non-linear operator appears naturally in many situations, and we refer the
reader to \cite{GP}, \cite{Hu}, \cite{Ly} and the references cited therein for further information.
As usual, we say that a function $u$ 
is $p$-harmonic if $\De_p u =0.$

\medskip
Let $E\subset M$ be an \emph{end} of $M$, that is an unbounded connect component
of $M\setminus \Om$, for some compact subset, $\Om\subset M$, with smooth boundary.
We say that $E$ is $p$-\emph{parabolic}
(see Definition 2.4 of \cite{Li08} for $p=2$ and Theorem 2.5 of \cite{PST} for
the general case)  if it does not admit a $p$-harmonic function, $f:E\to \real$, satisfying:
$$
\begin{cases}
f|_{\p E}=1; \\
\liminf_{{y\to \infty}\atop{y\in E}} f(y)<1.
\end{cases}
$$
Otherwise, we say that $E$ is a $p$-\emph{hyperbolic} end of $M$.

\medskip

In \cite{LW02} Li and Wang obtained the following characterization of
the ends of complete manifolds.
For simplicity, we omit the volume element of integrals.

\begin{maintheorem}[Corollary 4 of \cite{LW02}] Let $E$ be an end
of a complete manifold. Suppose that, for some constants $\nu \geq 1$ and $C>0$, $E$
satisfies a Sobolev-type inequality of the form
\begin{equation}\label{sob-nu}
\Big(\int_E |u|^{2\nu}\Big)^{\frac{1}{\nu}} \leq C\int_E |\na u|^2,
\end{equation}
for all compactly supported Sobolev function $u \in  W_0^{1,2}(E)$.
Then $E$ must either have finite volume or be $2$-hyperbolic.
\end{maintheorem}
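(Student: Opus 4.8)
The plan is to argue by contradiction: suppose $E$ satisfies \eqref{sob-nu}, is $2$-parabolic (i.e.\ not $2$-hyperbolic), and has infinite volume, and produce from the parabolicity a sequence of admissible test functions in \eqref{sob-nu} whose $2\nu$-norms blow up while their Dirichlet energies stay bounded.

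First I would pass to the capacitary description of parabolicity. Fix $o\in E$, put $E_R:=E\cap B_R(o)$, and for $R$ large let $v_R$ be the harmonic function on $E_R$ with $v_R\equiv1$ on $\p E$ and $v_R\equiv0$ on $E\cap\p B_R(o)$, extended by $0$ outside $B_R(o)$. By the maximum principle $0\le v_R\le1$, the family $v_R$ is nondecreasing in $R$, and $\int_E|\na v_R|^2=\mathrm{cap}_2(\p E,E_R)$. Because $E$ is $2$-parabolic, the equivalence between the definition used here and the capacitary one (Definition 2.4 of \cite{Li08}, Theorem 2.5 of \cite{PST}) gives $\mathrm{cap}_2(\p E,E_R)\to0$ and $v_R\uparrow1$ locally uniformly on $\bar E$ as $R\to\infty$.

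Since $v_R$ equals $1$ on $\p E$ it does not lie in $W^{1,2}_0(E)$, so I would correct it by a fixed cut-off: choose a Lipschitz $\psi\colon\bar E\to[0,1]$ that vanishes near $\p E$ and equals $1$ outside a fixed compact collar $N\subset E$ of $\p E$, and set $u_R:=\psi\,v_R$. Then $u_R$ is Lipschitz, vanishes near $\p E$, and is supported in the compact set $N\cup(E\cap\overline{B_R(o)})$, hence $u_R\in W^{1,2}_0(E)$. Its energy is bounded: from $|\na u_R|^2\le2\psi^2|\na v_R|^2+2v_R^2|\na\psi|^2$ with $0\le\psi,v_R\le1$ and $\spp\na\psi\subset N$,
\[
\int_E|\na u_R|^2\ \le\ 2\int_E|\na v_R|^2+2\int_N|\na\psi|^2\ \xrightarrow[R\to\infty]{}\ 2\int_N|\na\psi|^2\ =:\ A<\infty .
\]
On the other hand $\psi\equiv1$ on $E\setminus N$ and $v_R\uparrow1$, so monotone convergence gives
\[
\int_E|u_R|^{2\nu}\ \ge\ \int_{E\setminus N}v_R^{2\nu}\ \xrightarrow[R\to\infty]{}\ \vol(E\setminus N)\ =\ \vol(E)-\vol(N)\ =\ +\infty ,
\]
using $\vol(N)<\infty$ and $\vol(E)=\infty$. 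Hence $\big(\int_E|u_R|^{2\nu}\big)^{1/\nu}\to\infty$ while $\int_E|\na u_R|^2$ stays bounded, contradicting \eqref{sob-nu}; so a $2$-parabolic end on which \eqref{sob-nu} holds must have finite volume.

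The delicate point is the reduction to the capacitary picture — that $2$-parabolicity forces $v_R\uparrow1$, equivalently $\mathrm{cap}_2(\p E,E)=0$, rather than merely $v_R\uparrow v_\infty$ for some harmonic $v_\infty\le1$ with $v_\infty|_{\p E}=1$; this is exactly the equivalence cited from \cite{Li08,PST}, and everything else is a routine cut-off estimate (the same argument works for $p$-parabolic ends with $2$ replaced by $p$).
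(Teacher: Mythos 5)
Your proof is correct, and it differs from the paper's argument in one genuine respect. Both proofs test the Sobolev inequality \eqref{sob-nu} with a function of the form (collar cutoff)$\times$(harmonic exhaustion function): your $v_R$ are exactly the paper's $f_r$ (equal to $1$ on $\partial E$, $0$ on $E\cap\partial B_R$), and both arguments rest on the same characterization of parabolicity, namely $f_r\uparrow 1$ locally uniformly. The difference is how the Dirichlet energy of the test function is controlled. The paper (proving the general Theorem~\ref{ends}) bounds $\int\varphi^p|\nabla f_r|^p$ by the Caccioppoli inequality of Lemma~\ref{cacci}, which uses only the equation $\Delta_p f_r=0$ and gives $\int\varphi^p|\nabla f_r|^p\le p^p\int f_r^p|\nabla\varphi|^p\le C\,V_E(r_0)$; this yields the explicit volume estimate $\left(V_E(r_1)-V_E(r_0)\right)^{p/q}\le C\,V_E(r_0)$ with no contradiction argument and no capacity input. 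You instead keep the collar cutoff $\psi$ fixed and control $\int_E|\nabla v_R|^2$ through the capacity identity together with the equivalence between parabolicity and $\mathrm{cap}_2(\partial E,E)=0$ (the delicate point you flag; it is indeed among the standard equivalent characterizations, e.g.\ Theorem 2.5 of \cite{PST}, so the citation is legitimate), and then contradict $\vol(E)=\infty$ via monotone convergence. Two observations: first, you do not actually need $\mathrm{cap}_2\to 0$ --- since $v_R$ is the energy minimizer for its boundary data and the zero-extension of $v_{R_0}$ is an admissible competitor, $\int_E|\nabla v_R|^2\le\int_E|\nabla v_{R_0}|^2$, so boundedness of the energies is elementary and already suffices in your estimate, making the argument independent of the finer capacity equivalence; second, both routes extend to all $p>1$, yours because $p$-harmonic $v_R$ are still $p$-energy minimizers, the paper's because Caccioppoli only uses the weak equation, with the paper's version having the mild advantage of producing a quantitative volume bound rather than a dichotomy obtained by contradiction.
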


In our first result, we extend the above theorem for $p$-hyperbolic ends. Namely

\begin{theorem}\label{ends}
Let $E$ be an end of a complete Riemannian manifold.  Assume that
for some constants, $1<p\leq q<\infty$ and $C>0$, $E$ satisfies a Sobolev-type
inequality of the form
\begin{equation}\label{sobolev}
\left( \int_E |u|^{q}\right)^\frac{p}{q}\leq C\int_E |\nabla u|^p,
\end{equation}
for all $u\in W^{1,p}_0(E)$. Then $E$ must either have
finite  volume or be $p$-hyperbolic.
\end{theorem}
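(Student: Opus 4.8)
To prove Theorem~\ref{ends} the plan is to argue by contraposition: I will assume that $E$ satisfies the Sobolev-type inequality \eqref{sobolev} and is \emph{not} $p$-hyperbolic — that is, $E$ is $p$-parabolic — and deduce that $\vol(E)<\infty$. The first, elementary, observation is that \eqref{sobolev} yields a lower bound for the $p$-capacity of subsets of $E$: if $A\subset E$ is a Borel set and $\phi\in W^{1,p}_0(E)$ satisfies $\phi\geq 1$ on $A$, then $\int_E|\phi|^q\geq\vol(A)$, so that
\begin{equation*}
\vol(A)^{p/q}\;\leq\;\Big(\int_E|\phi|^q\Big)^{p/q}\;\leq\;C\int_E|\na\phi|^p .
\end{equation*}
Hence it suffices to exhibit, for each large $i$, a function $\phi_i\in W^{1,p}_0(E)$ whose $p$-energy $\int_E|\na\phi_i|^p$ stays bounded as $i\to\infty$ and which satisfies $\phi_i\geq 1$ on a set $A_i$ with $\liminf_i\vol(A_i)=\infty$; the displayed inequality is then contradicted unless $\vol(E)<\infty$.

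To build the $\phi_i$ I will use the capacitors attached to the $p$-parabolicity of $E$. Fix an exhaustion $E_1\subset E_2\subset\cdots$ of $\overline E$ by relatively compact smooth domains with $\partial E\subset E_i$. Since $E$ is $p$-parabolic, by the characterization of $p$-parabolicity in terms of vanishing $p$-capacity (Theorem~2.5 of \cite{PST}; for $p=2$ see \cite{Li08}) — applied to the $p$-capacitors of $\partial E$ inside the $E_i$ — there is a sequence of Lipschitz functions $f_i$ with compact support in $\overline E$ such that $0\leq f_i\leq 1$, $f_i\equiv 1$ on $\partial E$, $f_i\to 1$ pointwise on $E$, and $\int_E|\na f_i|^p\to 0$. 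The obvious difficulty, which I expect to be the main obstacle, is that these $f_i$ are not admissible in \eqref{sobolev}: they equal $1$ on $\partial E$, and their complements $1-f_i$ are not compactly supported.

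To overcome this, fix once and for all a relatively compact neighborhood $N$ of $\partial E$ in $\overline E$ and a Lipschitz cutoff $\eta\colon\overline E\to[0,1]$ with $\eta\equiv 0$ on $\partial E$ and $\eta\equiv 1$ on $\overline E\setminus N$; write $A_0:=\int_E|\na\eta|^p$, a finite constant independent of $i$. Set $g_i:=\min(2f_i,1)$ and $\phi_i:=\eta\, g_i$. Then $\phi_i$ is Lipschitz, compactly supported in $\overline E$ and vanishes on $\partial E$, so $\phi_i\in W^{1,p}_0(E)$; moreover $\phi_i\equiv 1$ on the set $A_i:=\{x\in\overline E\setminus N:\ f_i(x)>1/2\}$, which lies in the compact set $\spp\phi_i$ and so has finite volume. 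Using $|\na g_i|=2|\na f_i|\,\mathbf 1_{\{f_i<1/2\}}$, $0\leq\eta,g_i\leq 1$, and $(a+b)^p\leq 2^{p-1}(a^p+b^p)$,
\begin{equation*}
\int_E|\na\phi_i|^p\;\leq\;2^{p-1}\Big(\int_E|\na\eta|^p+2^p\int_E|\na f_i|^p\Big)\;=\;2^{p-1}A_0+o(1),
\end{equation*}
so $\int_E|\na\phi_i|^p\leq 2^pA_0$ for all large $i$. On the other hand $\mathbf 1_{A_i}\to\mathbf 1_{\overline E\setminus N}$ pointwise (since $f_i\to 1$), so Fatou's lemma gives $\liminf_i\vol(A_i)\geq\vol(E\setminus N)$. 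Plugging $\phi_i$ into the capacity estimate of the first paragraph yields $\vol(A_i)^{p/q}\leq 2^pA_0\,C$ for all large $i$, whence $\vol(E\setminus N)\leq (2^pA_0\,C)^{q/p}<\infty$ and therefore $\vol(E)=\vol(N)+\vol(E\setminus N)<\infty$, as required.

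In short, the genuinely delicate point is reconciling the two function classes — \eqref{sobolev} controls only functions vanishing on $\partial E$, while the natural parabolicity capacitors are identically $1$ there — and this is exactly what the two modifications above achieve: multiplying by the fixed cutoff $\eta$ restores the condition $\phi_i|_{\partial E}=0$ at the price of the \emph{bounded} energy $A_0$, and replacing $f_i$ by $\min(2f_i,1)$ confines the remaining gradient to $\{f_i<1/2\}$, where $\na f_i$ is small in $L^p$; the rest is routine. Of the hypotheses, $q<\infty$ is used so that a bound on $\vol(A_i)^{p/q}$ is a bound on $\vol(A_i)$, and $p>1$ underlies the standard $p$-capacitor facts invoked for the sequence $\{f_i\}$.
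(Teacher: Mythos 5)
Your argument is correct, but it runs along a genuinely different track than the paper's. The paper works directly with the capacitor potentials $f_r$ of the Dirichlet problems on $E_r=E\cap B_r$: it applies \eqref{sobolev} to $\varphi f_r$, with $\varphi$ a cutoff vanishing on $\partial E$ and equal to $1$ outside $E_{r_0}$, and then kills the term $\int\varphi^p|\nabla f_r|^p$ by the Caccioppoli Lemma \ref{cacci}, so the only input from $p$-parabolicity is the locally uniform convergence $f_r\to 1$ (the Lemma 2.7-type statement in \cite{PST}); the outcome is the quantitative inequality $\left(V_E(r_1)-V_E(r_0)\right)^{p/q}\leq C_3\,V_E(r_0)$, which forces finite volume. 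You instead import the equivalent characterization of $p$-parabolicity by vanishing $p$-capacity, i.e.\ a null sequence with $f_i\to 1$ pointwise \emph{and} $\int_E|\nabla f_i|^p\to 0$, and then the truncation $\min(2f_i,1)$ together with the fixed cutoff $\eta$ plays the role that Lemma \ref{cacci} plays in the paper: the test function's energy is bounded by the fixed constant $A_0$ plus the vanishing capacitor energies, and Fatou converts the resulting uniform bound on $\vol(A_i)^{p/q}$ into $\vol(E\setminus N)<\infty$. What your route buys is that no $p$-harmonicity of the test functions (hence no Caccioppoli inequality) is needed once the null sequence is granted; what it costs is a stronger-looking, though standard and equivalent, input from \cite{PST} (the capacity statement, not just $f_r\uparrow 1$), and it loses the explicit volume comparison the paper obtains. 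Two cosmetic points: the capacitor potentials are not literally Lipschitz, but they and your $\phi_i=\eta\min(2f_i,1)$ lie in $W^{1,p}_0(E)$, which is all \eqref{sobolev} requires; and the truncation could even be dropped, since $\eta f_i\geq 1/2$ on $A_i$ already yields $\vol(A_i)\leq 2^{q}\int_E|\eta f_i|^{q}$ after the obvious rescaling.
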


To prove this theorem we apply the techniques developed in \cite{LW02} and 
a lemma due to Cacciopolli (see Lemma \ref{cacci} in Section \ref{pre}).
Some application for Cheng's type inequalities are given in the Section \ref{applications}

\medskip
Our next result is characterization of $p$-hyperbolic ends in the context of
submanifolds as recently obtained in \cite{CMV}. Bellow, let us denote by
$H$ the mean curvature vector field of an isometric immersion $x:M^m\to \bar M$ 
and by $||H||_{L^q(E)}$ its Lebesgue $L^q$-norm on $E\subset M$.

\begin{theorem}\label{nonparabolic} Let $x:M^m\to \bar M$, with $m\geq 3$, be an 
isometric immersion of a complete non-compact manifold $M$ in a  manifold $\bar M$ 
with nonpositive  radial curvature.  Given, $1<p<m$,
let $E$ be an end of $M$ such that the mean curvature vector satisfies $\|H\|_{L^q(E)}<\infty$, 
for some  $q\in [p,m]. $
Then $E$ must either have finite volume or be $p$-hyperbolic.  
\end{theorem}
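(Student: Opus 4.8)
The plan is to deduce Theorem~\ref{nonparabolic} from Theorem~\ref{ends} by establishing that the hypotheses---an isometric immersion into a manifold of nonpositive radial curvature together with an integral bound on the mean curvature---force the end $E$ to satisfy a Sobolev-type inequality of the form \eqref{sobolev}. This reduces everything to verifying the correct Sobolev inequality on $E$, after which the dichotomy (finite volume or $p$-hyperbolic) is immediate from Theorem~\ref{ends}.

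\medskip

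First I would recall the Hoffman--Spruck Sobolev inequality for submanifolds: if $x:M^m\to\bar M$ is an isometric immersion into a Cartan--Hadamard-type ambient (nonpositive sectional, or here nonpositive \emph{radial}, curvature), then for every compactly supported $u\in W^{1,1}_0(M)$ one has an inequality of the shape
\[
\left(\int_M |u|^{\frac{m}{m-1}}\right)^{\frac{m-1}{m}} \le C(m)\int_M\left(|\nabla u| + |H|\,|u|\right).
\]
Applying this to $|u|^{\gamma}$ for a suitable exponent $\gamma>1$, expanding $\nabla(|u|^\gamma)=\gamma|u|^{\gamma-1}\nabla u$, and using Hölder's inequality to absorb the mean-curvature term $\int |H|\,|u|^{\gamma}$ against the bound $\|H\|_{L^q(E)}<\infty$, I would arrive---after the standard bookkeeping of exponents---at an inequality of the form $\big(\int_E |u|^{q^*}\big)^{p/q^*}\le C\int_E|\nabla u|^p$ valid on the end $E$, where the target exponent $q^*$ depends on $m$, $p$, and $q$, and the condition $q\in[p,m]$ is exactly what makes the Hölder step close and keeps $q^*\ge p$. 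Here I should be a little careful that the inequality is being applied on the \emph{end} $E$ rather than all of $M$; since $E$ is an open submanifold with boundary, functions in $W^{1,p}_0(E)$ extend by zero to $W^{1,p}_0(M)$, so the submanifold Sobolev inequality applies verbatim to such $u$.

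\medskip

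With \eqref{sobolev} in hand for the pair $(p,q^*)$ with $1<p\le q^*<\infty$ and the constant $C$, Theorem~\ref{ends} applies directly and gives that $E$ either has finite volume or is $p$-hyperbolic, which is the assertion. \textbf{The main obstacle} I expect is purely the exponent arithmetic in the Hölder absorption step: one must choose $\gamma$ so that $|u|^{\gamma-1}$ pairs correctly with $\nabla u$ in $L^p$ while the leftover power of $|u|$ pairs with $|H|\in L^q$, and simultaneously arrange that the resulting power $q^*$ on the left-hand side satisfies $q^* \ge p$ so that the hypothesis of Theorem~\ref{ends} is met; tracking that $q\le m$ is precisely the constraint guaranteeing a valid choice of $\gamma$ is the delicate point. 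A secondary technical point is confirming that ``nonpositive radial curvature'' (rather than full nonpositive sectional curvature) still suffices for the Hoffman--Spruck-type inequality, which should follow from the version used in \cite{CMV}; I would cite that reference for the precise statement rather than reprove it.
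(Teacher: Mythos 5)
Your reduction to Theorem \ref{ends} has a genuine gap: the hypotheses of Theorem \ref{nonparabolic} do not yield a clean Sobolev inequality \eqref{sobolev} on $E$, and the Hölder absorption step you describe cannot be closed. Applying Hoffman--Spruck to $|u|^{\gamma}$ and estimating the mean-curvature term by Hölder against $\|H\|_{L^q(E)}$ produces the power $|u|^{\gamma q/(q-1)}$ (equivalently, in the $p$-form, $|u|^{pq/(q-p)}$), and for $q<m$ this exponent is \emph{strictly larger} than the exponent $\gamma m/(m-1)$ (resp.\ $pm/(m-p)$) appearing on the left-hand side; no choice of $\gamma$ matches them, so for a general $u\in W^{1,p}_0(E)$ the term cannot be absorbed. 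Even in the borderline case $q=m$, where the exponents do match, absorption needs $\|H\|_{L^m}$ to be \emph{small}, not merely finite, and smallness can only be arranged on $E$ minus a large ball, so at best one gets \eqref{sobolev} on a sub-end. In fact the reduction is false as stated: an exponentially narrowing rotational cusp end $E$ of a complete hypersurface in $\mathbb{R}^{m+1}$ has finite volume and $\|H\|_{L^q(E)}<\infty$ for $q<m-1$, yet test functions supported far down the cusp (where a unit piece has volume $v\to 0$) give $\big(\int_E|u|^{q^*}\big)^{p/q^*}\sim v^{p/q^*}$ against $\int_E|\nabla u|^p\sim v$, so \eqref{sobolev} fails for every $q^*>p$; thus no argument can deduce such an inequality from the hypotheses alone.

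The paper's proof avoids this entirely: it never establishes \eqref{sobolev}, but instead assumes for contradiction that $E$ is $p$-parabolic with infinite volume and applies the Hoffman--Spruck inequality (with the $|H|$ term retained) to the specific functions $\varphi f_r$, where $f_r$ are the harmonic exhaustion functions and $\varphi$ is a cut-off. Two features of these functions, unavailable for general test functions, make the estimate close: the pointwise bound $0\le f_r\le 1$, which gives $f_r^{pq/(q-p)}\le f_r^{pm/(m-p)}$ and lets the Hölder step land back on the quantity $h(r)=\int_{E_r\setminus E_{r_0}} f_r^{pm/(m-p)}$, and the freedom to take $r_0$ large so that $\|H\|^p_{L^q(E\setminus E_{r_0})}<\tfrac{1}{2SC}$, which makes the absorbed coefficient small; Caccioppoli (Lemma \ref{cacci}) handles the gradient term exactly as in your outline. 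This bounds $h(r)$, contradicting $h(r)\to\infty$, which follows from $f_r\to 1$ and the infinite-volume assumption. To salvage your strategy you would have to prove a Sobolev-type inequality only for bounded functions on the end outside a compact set and then re-run the proof of Theorem \ref{ends} in that restricted setting --- which is essentially the paper's argument rather than an application of Theorem \ref{ends}.
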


As a direct consequence, we have:
\begin{corollary}
Let $x:M^m\to \bar M$, with $m\geq 3$, be a minimal isometric immersion of a complete 
manifold $M$ in a manifold $\bar M$ with nonpositive radial curvature. Then, each end of $M$ is
$p$-hyperbolic, for each $p\in (1,m)$.

\end{corollary}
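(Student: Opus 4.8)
The plan is to deduce the corollary directly from Theorem \ref{nonparabolic} by ruling out the finite volume alternative. Since $x\colon M^m\to\bar M$ is minimal, the mean curvature vector $H$ vanishes identically, so for any $q\in[p,m]$ we trivially have $\|H\|_{L^q(E)}=0<\infty$; thus Theorem \ref{nonparabolic} applies to every end $E$ of $M$ and every $p\in(1,m)$, and it gives that each such $E$ is either of finite volume or $p$-hyperbolic. It therefore remains only to exclude the finite volume possibility, and here the point is that a complete minimal submanifold of a manifold with nonpositive radial (hence nonpositive sectional along radial directions) curvature has infinite volume on every end.

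First I would recall the monotonicity-type / comparison argument that underlies the volume growth of minimal submanifolds: using the distance function $r$ from a fixed point $\bar o\in\bar M$ and the Hessian comparison theorem, nonpositive radial curvature gives $\bar\nabla^2 r \geq \frac{1}{r}(\langle\cdot,\cdot\rangle - dr\otimes dr)$; restricting to $M$ and tracing, minimality ($\sum \II(e_i,e_i)=mH=0$) yields $\Delta_M (r\circ x) \geq \frac{m}{r}(1-|\nabla^M r|^2)\cdot\frac{1}{?}$ — more precisely one obtains the classical inequality $\Delta_M\, r \ge \frac{m-|\nabla r|^2}{r}$, and in particular $\Delta_M\, r^2 \ge 2m$ in the barrier sense. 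Integrating $\Delta_M r^2\ge 2m$ over intrinsic balls (or over the portion $E\cap B_R$) and applying the divergence theorem produces the monotonicity formula $\frac{d}{dR}\big(R^{-m}\vol(E\cap B_R^{\bar M})\big)\ge 0$ after the standard manipulation, which forces $\vol(E\cap B_R)\ge c R^m$ for $R$ large. Hence every end has infinite volume.

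The cleanest packaging, which I would actually write, is to avoid re-deriving monotonicity and instead invoke the Sobolev inequality that is already implicit in the proof of Theorem \ref{nonparabolic}: the hypotheses there (minimal immersion, $\bar M$ with nonpositive radial curvature) yield, via Hoffman–Spruck / Michael–Simon, a Sobolev inequality $\big(\int_E|u|^{m/(m-1)}\big)^{(m-1)/m}\le C\int_E|\nabla u|$ on $E$ with the mean curvature term absent. A manifold (or end) supporting such an $L^1$-Sobolev inequality has infinite volume: testing with $u$ a cutoff equal to $1$ on $B_R$ and supported in $B_{2R}$ gives $\vol(B_R)^{(m-1)/m}\le C\,\vol(B_{2R}\setminus B_R)/R$, and if $\vol(E)<\infty$ the right-hand side tends to $0$ while the left-hand side does not, a contradiction. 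So the finite volume case is impossible and $E$ must be $p$-hyperbolic for every $p\in(1,m)$.

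The main (and essentially only) obstacle is a bookkeeping one: making sure the exponent $q$ in Theorem \ref{nonparabolic} can genuinely be chosen in $[p,m]$ for every $p\in(1,m)$ — which it can, since $0=\|H\|_{L^q(E)}$ for all $q$ — and making sure the infinite-volume assertion for minimal submanifolds of nonpositively curved ambients is quoted with the correct hypotheses (completeness of $M$ and the radial curvature sign are exactly what is needed; no properness or embeddedness is required for the volume lower bound via $\Delta_M r^2\ge 2m$). Once these points are in place the corollary is immediate.
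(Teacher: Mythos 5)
Your overall route is the paper's: the corollary is obtained from Theorem \ref{nonparabolic} by observing that $H\equiv 0$ gives $\|H\|_{L^q(E)}=0<\infty$ for every $q\in[p,m]$, and you are right that one must still exclude the finite-volume alternative, a step the paper leaves implicit. The gap is in the argument you say you would actually write. The Hoffman--Spruck inequality on $E$ is only available for test functions compactly supported in $E$, hence vanishing near $\partial E$; your cutoff, which equals $1$ on $E\cap B_R$, is not admissible. If you repair it in the obvious way (zero on $E_{r_0}$, equal to $1$ on $E_R\setminus E_{r_1}$, cut off between $B_R$ and $B_{2R}$), then letting $R\to\infty$ you only obtain
$$
\big(\vol(E)-\vol(E_{r_1})\big)^{\frac{m-1}{m}}\;\le\; \frac{C}{r_1-r_0}\,\vol(E_{r_1}\setminus E_{r_0}),
$$
and when $\vol(E)<\infty$ both sides tend to $0$ as $r_1\to\infty$ (for instance $\vol(E)-\vol(E_r)=e^{-r}$ is perfectly compatible with it), so no contradiction arises: annular cutoffs at infinity cannot rule out a finite-volume end.

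The correct way to extract infinite volume from the $L^1$-Sobolev inequality is local rather than asymptotic: applied on all of $M$ (where there is no boundary issue) it gives the standard lower bound $\vol\big(B^M(x,\rho)\big)\ge c_m\,\rho^m$ for every intrinsic ball, and since $E$ is unbounded one can pick $x_i\in E$ with $d(x_i,\partial E)\to\infty$, so that $B^M\big(x_i,d(x_i,\partial E)\big)\subset E$ and hence $\vol(E)=\infty$. Your first sketch via $\Delta_M r^2\ge 2m$ suffers from the same localization problem: integrating over $E\cap B_R^{\bar M}$ produces a flux term through $\partial E$, and for a non-proper immersion the extrinsic sublevel sets need not even be compact, so that argument too should be run on balls centered deep inside the end rather than on $E\cap B_R$. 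With either of these replacements the exclusion of finite volume is legitimate, and the rest of your proposal (the application of Theorem \ref{nonparabolic} with $H\equiv 0$ and the bookkeeping on $q$) is correct and matches the paper.
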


The main tool in the proof of Theorem \ref{nonparabolic} is the Hofmann-Spruck inequality 
\cite{HS} and its refinement given in \cite{BM}.


\section{Preliminaries on $p$-Harmonic function}\label{pre}
In this section we prove two basic results which will be used to prove Theorems \ref{ends}
and \ref{nonparabolic} as well for Cheng's inequalities in Section \ref{applications}.
We first refine a technical lemma due to Caccioppoli (see Lemma 2.9 of \cite{Lind}).

\begin{lemma}(Caccioppoli) \label{cacci}Let $\Om\subset M$ be a compact set and let
$\Ga$ be a connect component of $\partial \Om$.
Given $p>1$, if $u$ is a weak solution for the $p$-Laplace equation in $\Om$ such
that $u$ vanishes on $\Ga$, then 
$$
\int_\Om \varphi^p|\nabla u|^p \le p^p \int_\Om u^p |\nabla \varphi|^p,
$$
for all smooth function $\varphi$ such that $0\le \varphi \le 1$ and $\varphi$ equals
zero in $\partial \Om \setminus \Ga$.
\end{lemma}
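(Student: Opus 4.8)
The plan is to run the standard Caccioppoli argument: test the weak formulation of $\Delta_p u = 0$ against the function $\eta := \varphi^p u$, which vanishes on all of $\partial\Omega$ (on $\Gamma$ because $u$ does, on $\partial\Omega\setminus\Gamma$ because $\varphi$ does), hence is an admissible test function in $W^{1,p}_0(\Omega)$. First I would write out $\nabla\eta = \varphi^p\nabla u + p\varphi^{p-1}u\,\nabla\varphi$ and plug into $\int_\Omega |\nabla u|^{p-2}\langle\nabla u,\nabla\eta\rangle = 0$, obtaining
$$
\int_\Omega \varphi^p|\nabla u|^p = -p\int_\Omega \varphi^{p-1}u\,|\nabla u|^{p-2}\langle \nabla u, \nabla\varphi\rangle .
$$
Next I would bound the right-hand side: by Cauchy–Schwarz in the inner product, $|\langle\nabla u,\nabla\varphi\rangle|\le |\nabla u||\nabla\varphi|$, so the integrand is at most $p\,\varphi^{p-1}|u|\,|\nabla u|^{p-1}|\nabla\varphi|$. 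Now apply Young's inequality to split the factor $\varphi^{p-1}|\nabla u|^{p-1}$ against $|u||\nabla\varphi|$ with conjugate exponents $\tfrac{p}{p-1}$ and $p$: for any $\varepsilon>0$,
$$
p\,\varphi^{p-1}|\nabla u|^{p-1}\cdot |u||\nabla\varphi| \le \varepsilon\,\varphi^p|\nabla u|^p + C(\varepsilon,p)\,u^p|\nabla\varphi|^p .
$$
Absorbing the $\varepsilon\int\varphi^p|\nabla u|^p$ term into the left-hand side and tracking the constant carefully — one checks that the optimal choice of $\varepsilon$ (essentially $\varepsilon=1/p$ after balancing, or more precisely optimizing Young's constant) yields exactly the factor $p^p$ — gives the claimed inequality $\int_\Omega\varphi^p|\nabla u|^p\le p^p\int_\Omega u^p|\nabla\varphi|^p$.

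The one genuinely delicate point is the \emph{admissibility of the test function and the boundary condition}. The function $u$ is only a weak solution in $\Omega$ vanishing on the component $\Gamma$ of $\partial\Omega$, not necessarily on all of $\partial\Omega$, so $u$ itself is not in $W^{1,p}_0(\Omega)$; it is the cutoff $\varphi^p u$ that lies in $W^{1,p}_0(\Omega)$, since $\varphi$ kills the part of the boundary where $u$ need not vanish and $u$ kills $\Gamma$. I would make this precise by approximating: take $\varphi_k$ smooth with $\varphi_k\to\varphi$ and $\varphi_k$ supported away from $\partial\Omega\setminus\Gamma$, and use that $u\in W^{1,p}(\Omega)$ together with a truncation $u_j = \max(\min(u,j),-j)$ if needed to justify that products and limits pass through the weak formulation. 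The regularity of $u$ (which is $C^{1,\alpha}_{loc}$ by standard $p$-Laplacian theory, though we only need $W^{1,p}_{loc}$ here) guarantees all integrals are finite on the compact set $\Omega$. Once admissibility is settled, the rest is the routine Young/absorption computation sketched above, and I would be careful to state it so that the constant is exactly $p^p$ rather than some larger dimension-free constant — this is the "refinement" over the version in \cite{Lind}, and it comes from not wasting anything in the application of Young's inequality.
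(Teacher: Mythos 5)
Your proposal is correct and takes essentially the same route as the paper: both test the weak equation with $\varphi^p u$ (admissible since $u$ kills $\Gamma$ and $\varphi$ kills $\partial\Om\setminus\Ga$) and estimate the resulting term $p\int_\Om \varphi^{p-1}|u|\,|\nabla u|^{p-1}|\nabla\varphi|$; the paper simply applies H\"older at the integral level and divides by $\bigl(\int_\Om \varphi^p|\nabla u|^p\bigr)^{(p-1)/p}$, which yields $p^p$ immediately, whereas your pointwise Young-plus-absorption step gives the same constant after optimizing, the optimal choice being $\varepsilon=(p-1)/p$ (not $1/p$, though your hedge ``optimizing Young's constant'' covers this), since $\min_{0<\varepsilon<1}\frac{(p-1)^{p-1}}{(1-\varepsilon)\varepsilon^{p-1}}=p^p$. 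So the argument is sound and the constant $p^p$ is recovered either way.
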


\begin{proof}
Since $\De_p u=0$ weakly in $\Om$ and $\varphi^pu$ vanishes on $\partial \Om$
we have
$$
\int_\Om\langle \nabla (\varphi^p u), |\nabla u|^{p-2}\nabla u\rangle=0.
$$
Thus, using H\"older inequality,
\begin{eqnarray*}
\int_\Om \varphi^p |\nabla u|^p&=&
-p\int_\Om\varphi^{p-1}u\,\langle  |\nabla u|^{p-2}\nabla u, \nabla\varphi\rangle\\
&\le& p\int_\Om |\varphi \nabla u|^{p-1} |u\nabla \varphi| \\
&\le& p\bigg(\int_\Om \varphi^p |\nabla u|^{p}\bigg)^{(p-1)/p}
\bigg(\int_\Om |u|^p |\nabla \varphi|^{p}\bigg)^{1/p}.
\end{eqnarray*}
This completes the proof of the lemma.
\end{proof}

The next lemma is a well known result for the Laplacian operator and the proof follows
closely the one in \cite{Li}. We include the proof here for the sake of completeness.

\begin{lemma}\label{poli}
Let $M$ be a complete noncompact Riemannian manifold.
If $M$ has a polynomial volume growth, then $\lambda_{1,p}(M)=0.$
\end{lemma}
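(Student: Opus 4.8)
The plan is to use the variational characterization of the bottom of the $p$-spectrum,
$$
\la_{1,p}(M)=\inf_{f\in C_0^\infty(M)\setminus\{0\}}\frac{\int_M|\nabla f|^p}{\int_M|f|^p},
$$
and produce, under the polynomial volume growth hypothesis, a sequence of compactly supported test functions for which the Rayleigh quotient tends to zero. Fix a point $o\in M$ and let $r(x)=\mathrm{dist}(o,x)$; the polynomial growth assumption means $\vol(B(o,R))\le C R^{d}$ for some $d\ge 0$ and all large $R$. The standard choice is the family of radial cutoffs $f_R$ that equal $1$ on $B(o,R)$, decay linearly to $0$ on the annulus $B(o,2R)\setminus B(o,R)$, and vanish outside $B(o,2R)$; then $|\nabla f_R|\le 1/R$ a.e.\ on the annulus and vanishes elsewhere.

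With this choice one estimates $\int_M|\nabla f_R|^p\le R^{-p}\vol(B(o,2R))\le C\,2^{d}R^{d-p}$, while $\int_M|f_R|^p\ge\vol(B(o,R))$. If $d<p$ the quotient is already $\le C'R^{d-p}\to 0$ and we are done. To handle all $d$ (in particular $d\ge p$) the key trick is to exploit the freedom in the radius: instead of doubling $R$, interpolate on a long dyadic range. Concretely, for a large integer $k$ take $f$ to equal $1$ on $B(o,R)$, to be a logarithmic-type cutoff $\big(\log(R^{k}/r)\big)/\big((k-1)\log R\big)$ on $B(o,R^{k})\setminus B(o,R)$, and $0$ outside $B(o,R^{k})$; then on the annulus $|\nabla f|\le \frac{1}{(k-1)\log R}\cdot\frac{1}{r}$. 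Splitting the annulus into dyadic shells $B(o,2^{j+1}R)\setminus B(o,2^{j}R)$ and using polynomial growth on each shell, the sum $\int_M|\nabla f|^p$ is bounded by a constant times $\big((k-1)\log R\big)^{-p}$ times a sum that grows at most polynomially in $k\log R$; choosing $R$ fixed and $k\to\infty$, or more cleanly letting the inner radius and exponent grow together, forces the Rayleigh quotient to $0$ while $\int_M|f|^p\ge\vol(B(o,R))>0$ stays bounded below.

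The main obstacle is purely bookkeeping: one must pick the cutoff profile and the rate at which its parameters go to infinity so that the gradient integral, which is a weighted sum over dyadic annuli of terms like $(\text{radius})^{d-p}$, is controlled uniformly — when $d\ge p$ a naive linear cutoff fails and the logarithmic (or piecewise "tent on a log scale") profile together with summation of a convergent or slowly-growing geometric-type series is what saves the argument. Once the test functions are in hand, the conclusion $\la_{1,p}(M)=0$ is immediate from the infimum characterization, since $\la_{1,p}(M)\ge 0$ always and we have exhibited quotients arbitrarily close to $0$. I would organize the write-up as: (i) recall the Rayleigh quotient formula for $\la_{1,p}$; (ii) define the test functions $f_R$ (or $f_{R,k}$); (iii) bound the gradient integral via dyadic decomposition and polynomial growth; (iv) bound $\int|f|^p$ from below; (v) let the parameters tend to infinity and conclude.
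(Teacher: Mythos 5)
There is a genuine gap at your step (iii), and it occurs precisely in the case the logarithmic cutoff was supposed to rescue, namely polynomial degree $d>p$. With $f=\log(R^{k}/r)/((k-1)\log R)$ on $B(o,R^{k})\setminus B(o,R)$ and the dyadic shells $A_j=B(o,2^{j+1}R)\setminus B(o,2^{j}R)$, $j=0,\dots,J$ with $J\approx (k-1)\log_2 R$, polynomial growth gives
$$
\int_{A_j} r^{-p}\;\le\; C\,2^{d}\,(2^{j}R)^{d-p},
$$
so the dyadic sum is a geometric series with ratio $2^{d-p}$. When $d>p$ this ratio exceeds $1$, the sum is dominated by its top term $\sim (2^{J}R)^{d-p}\sim R^{k(d-p)}$, which is \emph{exponential} in $k\log R$, not polynomial as you claim; dividing by $\big((k-1)\log R\big)^{p}$ does not make the quotient small, whether you send $k\to\infty$ with $R$ fixed or let the parameters grow together. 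The logarithmic profile only handles $d\le p$ (for $d=p$ the shell contributions are bounded, the sum is $\approx k\log R$, and $p>1$ saves you), so as written your construction does not prove the lemma for general polynomial growth.

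The paper avoids this by not constructing small Rayleigh quotients directly: it plugs the simple linear cutoff into the variational characterization to get $\lambda_{1,p}(M)\,V(r)\le r^{-p}V(2r)$, then argues by contradiction, using the assumed positivity of $\lambda_{1,p}(M)$ as a bootstrap to improve the volume bound from $V(r)\le Cr^{k}$ to $V(r)\le C r^{k-p}$, iterating until the exponent drops below $p$, and finally letting $r\to\infty$ to force $V(M)=0$, a contradiction. If you prefer a direct construction, the fixable version of your idea is not a different cutoff profile but a better choice of radii: polynomial growth forces a sequence $r_j\to\infty$ along which the doubling ratio $V(2r_j)/V(r_j)$ stays bounded (otherwise the volume would grow super-polynomially), and the plain linear cutoff at those scales gives Rayleigh quotients $\le C\,r_j^{-p}\to 0$. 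Either of these supplies the missing ingredient; your current dyadic estimate does not.
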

\begin{proof}
By hypothesis, there exist $C>0$ and $k\geq 0$ such that
$$
V(r):=Vol(B_r)\leq C\, r^k,
$$
for all $r>0$ big enough.
On the other hand, from the variational characterization of $\lambda_{1,p}(M)$ we have
$$
\lambda_{1,p}(M)\int_M|\varphi|^p \leq \int_M|\nabla\varphi|^p,
$$
for any $\varphi \in W^{1,p}_0(M)$.
Given $x\in M$, let us denote by $r(x)$ the distance function on $M$ from a fixed point.
So, given $r>0$, if  we choose
\begin{equation*}
\varphi(x)=\left\{ \begin{array}{ll}
1 & \textrm{ on }B_r,\\
\dfrac{2r-r(x)}{r}  & \textrm{ on } B_{2r}\setminus B_r, \\
0 & \textrm{ on } M\setminus B_{2r},
\end{array}\right.
\end{equation*}
we obtain
\begin{equation}\label{123}
\lambda_{1,p}(M)V(r)\leq r^{-p}V(2r),
\end{equation}
for all $r> 0$.
Assuming, by contradiction, that $\lambda_{1,p}(M)$ is positive and
applying  the volume growth assumption to $V(2r)$ we get
$
V(r)\leq Cr^{k-p},
$
for $r>0$ big enough.

Iterating this argument $\left[\dfrac{k}{p}\right]$ times we obtain
$V(r)\leq C r^a,$
with $a < p$. Now, we use the inequality $(\ref{123})$ to obtain
$$
\lambda_{1,p}(M)V(r)\leq Cr^{a-p}.
$$

Letting $r\rightarrow\infty$, we conclude that $V(M)=0$, which is a contradiction.

\end{proof}


\medskip
%
\section{Proof of Theorem  \ref{ends}}
Given $r>0$, let $B_r$ be a geodesic ball in $M$ centered at some
point $p\in M$. We set $E_r=E\cap B_r$ and $\partial E_r=E\cap\partial B_r$.

Let $f_r$ be the solution of the following Dirichlet problem
$$
\begin{cases}
\De_p f_r=0 \quad \textrm{ in }E_r,\\
f_r=1  \quad \quad \textrm{ in } \partial E, \\
f_r=0 \quad \quad\textrm{ in } \partial E_r.
\end{cases}
$$
By the arguments used  in the proof of Lemma 2.7 in \cite{PST}
$f_r \in C^{1,\alpha}_{loc}(E_r)\cap C(\bar{E}_r)$, $0 < f_r< 1$ in $E_r$, it is increasing
and converges (locally uniformly) to a $p$-harmonic function $f$ with $f \in C^{1,\alpha}_{loc}(E)\cap C(\bar{E})$
satisfying $0 < f \leq 1$ and $f=1$ on $\partial E$.

For a fixed $0<r_0<r$ such that $E_{r_0}\neq \emptyset$, let $\varphi$ be a nonnegative
cut-off function satisfying the properties that
$$
\begin{cases}
\varphi=1 \quad \textrm{ on }E_r\setminus E_{r_0} ,\\
\varphi =0 \quad \quad \textrm{ on } \partial E, \\
|\nabla\varphi| \leq C.
\end{cases}
$$

Applying the inequality ($\ref{sobolev}$) of the assumption and using the fact
that $f_r$ is $p$-harmonic, we obtain

\begin{equation*}
\begin{array}{rll}
\left(\int_{E_r}|\varphi f_r|^p\right)^{p/q}&\leq &C \int_{E_r}|\nabla(\varphi f_r)|^p
= C \int_{E_r}|\varphi\nabla f_r+f_r\nabla \varphi|^p   \\
&\leq & C_1 \int_{E_r}|\varphi\nabla f_r|^p+|f_r\nabla \varphi|^p \\
&\leq&  C_2 \int_{E_r}|f_r|^p|\nabla\varphi|^p \\
&\leq &C_3 \int_{E_r}|f_r|^p,
\end{array}
\end{equation*}
where we have used that $(a+b)^p \leq C(a^p + b^p)$, for a fixed constant $C=2^{p-1}$, and every positive numbers $a$, $b$
in the second inequality, Cacciopoli's Lemma, \ref{cacci}, in the third inequality and  $|\nabla\varphi| \leq C$, in the last inequality.

In particular, for a fixed $r_1$ satisfying $r_0 < r_1 <  r$, we have
$$
\left(\int_{E_{r_1}\setminus E_{r_0}}f_r^q\right)^{p/q} \leq C_3\int_{E_{r_0}}f_r^p.
$$
If $E$ is $p$-parabolic, then the limiting function $f$ is identically $1$. Letting $r \rightarrow \infty$, we obtain
$$
\left(V_E(r_1)-V_E(r_0)\right)^{p/q}\leq C_3\, V_E(r_0),
$$
where $V_E(r)$ denotes the volume of the set $E_r$. Since $r_1 > r_0$ is arbitrary,
this implies that $E$ has finite volume. This conclude proof of the theorem.

\hspace{12cm} $\Box$

\section{Proof of Theorem \ref{nonparabolic}}
Let $f_r$ be the sequence given above and $f$ its limit. 
%
%
%
%
%
%
Let us suppose, by contradiction, that $f\equiv 1$ and $\vol(E)$ is infinite. 
This implies that, given any $L>1$, there exists
$r_1>r_0$ such that $\vol(E_{r_1}-E_{r_0})>2L$.
Since  $f_r\to 1$  uniformly on compact subsets, there exists
$r_2>r_1$ such that $f_r^{\frac{pm}{m-p}}>\frac{1}{2}$ everywhere in 
$E_{r_1}$, for all $r>r_2$. Thus, defining  $h(r):=\int_{E_r-E_{r_0}} f_r^{\frac{pm}{m-p}}$, 
with $r>r_0$, we obtain
\begin{equation}\label{h(r)}
h(r)\geq \int_{E_{r_1}-E_{r_0}} f_r^{\frac{2m}{m-2}}>L,
\end{equation}
for all $r>r_2$. In particular, we have that $\lim_{r\to\infty}h(r)=\infty$.

Now, for each $r>r_0$,  let   
$\varphi=\varphi_r \in C^\infty_0(E)$ be a cut-off function satisfying: 
$$
\begin{cases}
0\leq \varphi \leq 1 \textrm{ everywhere in } E;\\
\varphi \equiv 1 \textrm{ in }  E_r-E_{r_0}.
\end{cases}
$$
By modified Hoffmann-Spruck Inequality \cite{HS} or \cite{BM} we have
\begin{eqnarray*}\label{hsvarphi}
S^{-1}\left(\int_{E_r} (\varphi f_r)^{\frac{pm}{m-p}}\right)^{\frac{m-p}{m}}
\leq \int_{E_r} |\na (\varphi f_r)|^p + \int_{E_r} (\varphi f_r)^p|H|^p,
 \end{eqnarray*}
where $S$ is a positive constant and $p\in (1,m)$.

Using that $f_r\varphi$ vanishes on $\partial E_r$ and the Cacciopoli's Lemma \ref{cacci} 
we obtain
\begin{eqnarray*}\label{hsvarphi}
S^{-1}\left(\int_{E_r} (\varphi f_r)^{\frac{pm}{m-p}}\right)^{\frac{m-p}{m}}
&\leq&C\left( \int_{E_r} f_r^p|\nabla \varphi|^p + \int_{E_r} (\varphi f_r)^p|H|^p\right),
 \end{eqnarray*}
where $C=1+p^p.$ Thus, since $0\leq\varphi\leq 1$ in $E$ and 
$\varphi\equiv 1$ in $E_r-E_{r_0}$, we obtain
\begin{equation}\label{sob-c}
(SC)^{-1}h(r)^{\frac{m-p}{m}}\leq (SC)^{-1}\left(\int_{E_r} (\varphi f_r)^{\frac{pm}{m-p}}
\right)^{\frac{m-p}{m}}
\leq \int_{E_{r_0}} f_r^p|\na \varphi|^p + \int_{E_r} f_r^p|H|^p.
\end{equation}

First, assume that $\|H\|_{L^p(E)}$ is finite. Then, since $0\leq f_r\leq 1$, we have
\begin{equation*}\label{sob-fRp=2}
(SC)^{-1} h(r)^\frac{m-p}{m}
\leq \int_{E_{r_0}}|\na \varphi|^p + \int_{E}|H|^p.
\end{equation*}
Thus, $\lim_{r\to\infty}h(r)<\infty$, which is a contradiction.
Now, assume that $\|H\|_{L^q(E)}$ is finite, for some  $p<q\leq m$. Note that $\frac{m}{m-p}\leq \frac{q}{q-p}$. Since $0\leq f_r\leq 1$ and $h(r)>1$, for all $r>r_2$, we have: 
$$
\begin{cases}
\label{ia} f_r^{\frac{pq}{q-p}}\leq f_r^{\frac{pm}{m-p}}; \\
\label{ib} h(r)^\frac{q-p}{q}\leq h(r)^\frac{m-p}{m}, \textrm{ for all } r>r_2. 
\end{cases}
$$
Thus, using H\"older Inequality, we have
\begin{eqnarray}\label{hder}
\int_{E_r-E_{r_0}} f_r^p |H|^p &\leq& \|H\|^p_{L^q(E_r-E_{r_0})}\left(\int_{E_r-E_{r_0}} f_r^{\frac{pq}{q-p}}\right)^{\frac{q-p}{q}}\\&\leq& \|H\|^p_{L^q(E-E_{r_0})}h(r)^{\frac{m-p}{m}},\nonumber 
\end{eqnarray}
for all $r> r_2$.

Choose $r_0>0$ large enough so that $\|H\|^p_{L^q(E-E_{r_0})}<\frac{1}{2SC}$. Using  (\ref{sob-c}) and (\ref{hder}) we get:
$$ (SC)^{-1} h(r)^{\frac{m-p}{m}}
\leq \int_{E_{r_0}} |\na \varphi|^p+\int_{E_{r_0}}|H|^p + \ \frac{(SC)^{-1}}{2}h(r)^{\frac{m-p}{m}}.$$
This shows that $\lim_{r\to\infty}h(r)<\infty$, which is a contradiction
 and Theorem \ref{nonparabolic} are proved.

\section{Cheng's Theorems for the $p$-Laplacian}\label{applications}

\medskip
Now we describe how we can apply Theorem \ref{ends}  to obtain new Cheng's type inequalities.
For that, we use the Li-Wang approach as in \cite{LW05}.

\medskip

Given a regular domain $\Om\subset M$ let   $\la_1(\Om)$ be the
first Dirichlet eigenvalue of the Laplacian operator.
That is,
$$
\la_1(\Om) = \inf \bigg\{ \frac{\int_\Om |\nabla \varphi|^2}{\int_\Om \varphi^2}:
\varphi \in W^{1,2}_0(\Om)\setminus \{0\} \bigg\}.
$$
We recall that the \emph{bottom of the spectrum} of $M$ is given by
$$
\la_1(M) = \displaystyle\lim_{i\to\infty}\la_1(\Om_i),
$$
where $\{ \Om_i\}_i$ is an exhaustion of $M$,
and this definition does not depend on the exhaustion.

Let $B^{M}_r$ denote a geodesic ball on $M$ with radius $r>0$ and centered at
some point of $M$. The classical Cheng's comparison theorem asserts that, if
$\textrm{Ric}_M \ge -(n-1)$, then $\la_1(B^M_r)\leq \la_1(B^{\mathbb{H}^n}_r)$,
where $\mathbb H^n$ denotes
the $n$-dimensional hyperbolic space $\mathbb H^n$.
One of the consequences is a sharp upper bound for the bottom of the
spectrum on a complete manifold with Ricci curvature bounded from below. Precisely

\smallskip
\begin{maintheorem}(Cheng \cite{Ch})\label{cheng1}
Let $M^m$ be a complete noncompact Riemannian manifold such that
the Ricci curvature of $M$ has a lower bound given by
$$
\textrm{Ric}_M\ge -(m-1).
$$
Then, the bottom of the spectrum of the Laplacian must satisfy the upper
bound
$$
\lambda_1(M)\leq \frac{(m-1)^2}{4}=\la_1(\mathbb H^m).
$$
\end{maintheorem}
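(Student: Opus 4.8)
The plan is to produce, for every $a>(m-1)/2$, a sequence of compactly supported test functions whose Rayleigh quotients converge to $a^{2}$; letting $a\downarrow(m-1)/2$ and using that $\la_1(\mathbb H^m)=(m-1)^2/4$ (McKean's computation of the bottom of the hyperbolic spectrum) will then give the asserted bound.

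First I would record the volume consequence of the Ricci hypothesis. Fix a base point $o\in M$ and write $r(x)=d(o,x)$, $B_t=B_t(o)$. By the Bishop--Gromov comparison theorem, $\mathrm{Ric}_M\ge-(m-1)$ forces the volume density of $M$ in geodesic polar coordinates to be dominated by the hyperbolic one, hence
\[
\vol(\p B_t)\le \om_{m-1}(\sinh t)^{m-1}\le C\,e^{(m-1)t}\qquad\text{for all }t>0,
\]
where $\om_{m-1}=\vol(\Sp^{m-1})$. By the coarea formula it follows that for each $a>(m-1)/2$ the function $e^{-ar}$ lies in $L^{2}(M)$, since
\[
I_a:=\int_M e^{-2ar}=\int_0^\infty e^{-2at}\,\vol(\p B_t)\,dt\le C\int_0^\infty e^{(m-1-2a)t}\,dt<\infty ,
\]
and plainly $I_a>0$ (the estimate, and everything below, is equally valid when $\vol(M)<\infty$).

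Next I would run the test-function estimate. For $R>0$ pick $\psi_R\in C^\infty_0(M)$ with $0\le\psi_R\le1$, $\psi_R\equiv1$ on $B_R$, $\spp\psi_R\subset B_{2R}$ and $|\na\psi_R|\le 1/R$, and put $u_R:=e^{-ar}\psi_R$; being Lipschitz with compact support, $u_R\in W^{1,2}_0(M)$. Since $|\na r|=1$ a.e. and $\na\psi_R$ vanishes outside the annulus $B_{2R}\setminus B_R$, expanding
\[
|\na u_R|^{2}=a^{2}e^{-2ar}\psi_R^{2}-2a\,e^{-2ar}\psi_R\lan\na r,\na\psi_R\ran+e^{-2ar}|\na\psi_R|^{2}
\]
and using $0\le\psi_R\le1$ and $|\na\psi_R|\le1/R$ yields
\[
\int_M |\na u_R|^{2}\le a^{2}\int_M e^{-2ar}\psi_R^{2}+\Big(\tfrac{2a}{R}+\tfrac{1}{R^{2}}\Big)\int_{B_{2R}\setminus B_R}e^{-2ar}.
\]
As $R\to\infty$ the tail $\int_{B_{2R}\setminus B_R}e^{-2ar}\to0$ because $e^{-2ar}\in L^{1}(M)$, while $\int_M e^{-2ar}\psi_R^{2}\to I_a$ by dominated convergence; likewise $\int_M u_R^{2}=\int_M e^{-2ar}\psi_R^{2}\to I_a>0$. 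Therefore, for every $R$,
\[
\la_1(M)\le\frac{\int_M |\na u_R|^{2}}{\int_M u_R^{2}},
\]
and the right-hand side tends to $a^{2}$; hence $\la_1(M)\le a^{2}$. Letting $a\downarrow(m-1)/2$ we conclude $\la_1(M)\le (m-1)^2/4=\la_1(\mathbb H^m)$.

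The only step that needs care is the vanishing of the two cut-off error terms, and this is dictated entirely by the integrability $e^{-ar}\in L^{2}(M)$ — precisely the place where $\mathrm{Ric}_M\ge-(m-1)$ is used, via Bishop--Gromov. I do not foresee any further difficulty. (Alternatively, one could substitute the classical eigenfunction-comparison proof: test $\la_1(B_R^M)$ with $u(x)=\bar u(r(x))$, where $\bar u$ is the radial, positive, decreasing first Dirichlet eigenfunction of $B_R^{\mathbb H^m}$, combine $\De u=\bar u''(r)+\bar u'(r)\De r$ with the Laplacian comparison $\De r\le(m-1)\coth r$ — valid in the barrier sense — and $\bar u'\le0$ to obtain $\De u\ge-\la_1(B_R^{\mathbb H^m})\,u$ weakly, integrate against $u$ to get $\la_1(B_R^M)\le\la_1(B_R^{\mathbb H^m})$, and let $R\to\infty$.)
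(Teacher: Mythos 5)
Your argument is correct, but note that the paper itself offers no proof of this statement: it is quoted as Cheng's classical theorem \cite{Ch}, and the route the paper alludes to is the ball-by-ball eigenvalue comparison $\la_1(B^M_r)\le\la_1(B^{\mathbb H^m}_r)$ (obtained from the Laplacian comparison $\De r\le (m-1)\coth r$ by transplanting the radial hyperbolic eigenfunction), followed by letting $r\to\infty$ --- which is exactly the alternative you sketch in your final parenthesis. Your primary proof is a genuinely different and equally valid route: Bishop--Gromov gives $\vol(\p B_t)\le \om_{m-1}\sinh^{m-1}t\le Ce^{(m-1)t}$, so $e^{-ar}\in L^2(M)$ for every $a>(m-1)/2$, and the cut-off test functions $e^{-ar}\psi_R$ have Rayleigh quotients tending to $a^2$; the error terms vanish precisely because of this integrability, and the computation is sound (the variational characterization of the bottom of the spectrum over compactly supported Lipschitz functions, the coarea formula with $|\na r|=1$ a.e., and the validity of the sphere-area comparison past the cut locus are all fine). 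This is essentially Brooks' volume-entropy bound $\la_1(M)\le h^2/4$: it only uses the exponential volume growth consequence of the Ricci bound, so it is more elementary, sidesteps cut-locus barrier arguments, and in fact proves the sharper statement that $\la_1(M)$ is bounded by one quarter of the square of the exponential volume growth rate (and it does not require infinite volume); the classical comparison route, by contrast, yields the stronger conclusion $\la_1(B^M_r)\le\la_1(B^{\mathbb H^m}_r)$ for every finite $r$, which your test-function argument does not recover. Either way the stated bound, together with McKean's identity $\la_1(\mathbb H^m)=(m-1)^2/4$, follows.
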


\medskip

The Cheng's theorem still holds for  the $p$-Laplacian operator.
An eigenfunction for the Dirichlet problem of the $p$-Laplacian on $\Om\subset M$ is
a nonzero function $u$ such that
$$
\begin{cases}
\Delta_p u= \lambda|u|^{p-2}u \quad \textrm{ in } \Om,\\
u=0 \quad \textrm{ on } \partial\Om,
\end{cases}
$$
for some number $\lambda\in\real$.
We shall denote by  $\la_{1,p}(\Om)$, the smallest eigenvalue of $\De_p$ in $\Om$ for the
Dirichlet problem. It is well known that $\la_{1,p}(\Om)$ has a variational
characterization, analogous to the first eigenvalue of the Laplacian (see  \cite{M})
$$
\la_{1,p}(\Om) = \inf \bigg\{ \frac{\int_\Om |\nabla u|^p}{\int_\Om |u|^p}:
u \in W^{1,p}_0(\Om)\setminus \{0\} \bigg\}.
$$
Using standard comparison ideas, Matei \cite{M} generalized Cheng's result
for the $p$-Laplacian operator, with $p\geq 2$.

\medskip

Using Theorem A and the growth rate of the volume of $2$-hyperbolic ends with positive spectrum
(Theorem 1.4  of  \cite{{LW01}}), Li and Wang proved Cheng's comparison theorem
for K\"ahler manifolds under an assumption on the bisectional curvature.
 Latter, Kong, Li and Zhou \cite{KLZ}
solved the case of quaternionic K\"ahler manifolds.

\medskip

Here we use Theorem \ref{ends} and the volume estimates of
Buckley and Koskela in \cite{BK} to
prove  Cheng's  inequalities for the $p$-Laplacian on K\"ahler and
K\"ahler quaternionic manifolds and thus, we complete the picture for
theses cases. More precisely

\begin{theorem}\label{c1}
Let $M^{2m}$ be a complete noncompact K\"ahler manifold, of real dimension $2m$,
such that the bisectional curvature of $M$ has a lower bound given by
$
\emph{BK}_M\geq -1.
$
Then, for each $p >1$, the bottom of the spectrum of the $p$-Laplacian
must satisfy the upper bound
$$
\lambda_{1,p}(M)\leq \frac{4^p m^p}{p^p}.
$$
Moreover, this estimate is sharp since equality is achieved by the complex hyperbolic space form
 $\mathbb{CH}^{2m}$.
\end{theorem}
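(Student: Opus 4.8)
The plan is to combine Theorem \ref{ends} with volume growth estimates for $p$-hyperbolic ends that carry positive bottom of spectrum, in the spirit of the Li--Wang dichotomy argument \cite{LW05}. First I would argue by contradiction: suppose $\la_{1,p}(M) > \frac{4^p m^p}{p^p}$. By the variational characterization of $\la_{1,p}$, this gives a Sobolev--Poincar\'e type inequality $\la_{1,p}(M)\int_M |u|^p \leq \int_M |\na u|^p$ for all $u\in W^{1,p}_0(M)$, and in particular for compactly supported functions on any end $E$ of $M$. Next I would recall the volume comparison available on K\"ahler manifolds with $\mathrm{BK}_M \geq -1$: geodesic balls satisfy a volume growth bound governed by the model complex hyperbolic space, and combining with the volume estimates of Buckley--Koskela \cite{BK} for $p$-hyperbolic (equivalently $p$-nonparabolic) ends with $\la_{1,p}(E)>0$, one controls the growth rate of $V_E(r)$ from below by an exponential rate tied to $\la_{1,p}$.

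The core tension to exploit is this: on one hand, a $p$-hyperbolic end with positive spectrum must have at least a definite exponential volume growth (from the Buckley--Koskela / Li--Wang estimates), with the exponent determined by $\la_{1,p}(E) \geq \la_{1,p}(M)$; on the other hand, Lemma \ref{poli} tells us that an end of polynomial volume growth has $\la_{1,p}=0$, so positivity of the spectrum already forces faster-than-polynomial growth, but we need the sharp exponential rate. I would then invoke Theorem \ref{ends}: since $M$ (hence each end $E$) satisfies a Sobolev-type inequality with $p=q$ coming directly from $\la_{1,p}(M)>0$ (taking $C = \la_{1,p}(M)^{-1}$ and $q=p$), every end $E$ is either of finite volume or $p$-hyperbolic. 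A finite-volume end with positive spectrum is ruled out again by Lemma \ref{poli} and elementary considerations, so $E$ must be $p$-hyperbolic; but then its volume grows at the Buckley--Koskela exponential rate. Comparing this forced lower bound on growth against the upper bound on volume growth coming from the K\"ahler curvature comparison $\mathrm{BK}_M\geq -1$ yields the inequality $\la_{1,p}(M) \leq \frac{4^p m^p}{p^p}$.

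Concretely, the steps in order are: (1) set up the contradiction hypothesis and extract the $L^p$ Sobolev inequality on ends from $\la_{1,p}(M)>0$; (2) apply Theorem \ref{ends} to conclude each end is finite-volume or $p$-hyperbolic, and eliminate the finite-volume case using Lemma \ref{poli}; (3) apply the Buckley--Koskela volume estimate to a $p$-hyperbolic end with spectrum bounded below by $\la_{1,p}(M)$ to get a lower bound $V_E(r) \gtrsim e^{\al r}$ with $\al$ an explicit increasing function of $\la_{1,p}(M)^{1/p}$; (4) apply the K\"ahler volume comparison under $\mathrm{BK}_M\geq -1$ to get an upper bound $V_E(r)\lesssim e^{\be r}$ with $\be = \frac{4m}{p}\cdot(\text{something})$; (5) reconcile $\al \leq \be$ to obtain the stated bound, and check that $\mathbb{CH}^{2m}$ realizes equality by direct computation of its $p$-Laplace spectrum. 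The main obstacle I anticipate is step (3)--(4): making the volume growth estimates sharp enough that the resulting inequality is exactly $\la_{1,p}(M)\leq 4^p m^p/p^p$ rather than something off by a multiplicative constant. This requires the Buckley--Koskela estimate to be applied with the optimal exponent and the K\"ahler comparison to be used in its sharp form (the volume element of $\mathbb{CH}^{2m}$ behaves like $\sinh^{2m-1}(r)\cosh(r)$, giving growth rate $2m$ in the real-distance normalization, which after the $p$-Laplacian scaling produces the factor $4^p m^p / p^p$); any slack in either estimate must be shown to vanish, which is precisely where the Li--Wang technique of testing with the $p$-harmonic barrier function $f$ on the end — rather than a crude cutoff — becomes essential.
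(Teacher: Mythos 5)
Your proposal follows essentially the same route as the paper's proof: positivity of $\la_{1,p}(M)$ gives the Sobolev inequality \eqref{sobolev} with $q=p$ and $C=\la_{1,p}(M)^{-1}$, Theorem \ref{ends} together with Lemma \ref{poli} rules out finite volume and forces $p$-hyperbolicity, the Buckley--Koskela estimate gives $V(r)\geq C_0\exp\bigl(p\,\la_{1,p}(M)^{1/p}r\bigr)$, and the curvature hypothesis gives $V(r)\leq C\exp(4mr)$, so letting $r\to\infty$ compares only the exponential rates and yields $\la_{1,p}(M)\leq (4m/p)^p$ --- in particular the multiplicative-constant ``slack'' you worried about is harmless, and no refined use of the $p$-harmonic barrier is needed. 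The only notable deviations are minor: the paper proves sharpness not by computing the $p$-spectrum of $\mathbb{CH}^{2m}$ directly but via Lemma \ref{L3} applied to $X=\na r$ (using $\Delta r = 2\coth 2r + 2(2m-1)\coth r \geq 4m$), and the volume growth rate of $\mathbb{CH}^{2m}$ is $4m$ (with the factor $p$ coming from the Buckley--Koskela exponent), not $2m$ as in your heuristic.
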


\begin{remark} Munteanu (\cite{Mu09}) has obtained a Cheng's comparison theorem
for K\"ahler manifolds under the weaker assumption on Ricci curvature when $p=2$.
However, the techniques we used in this note do not work in that case.
\end{remark}

Following \cite{KLZ}, we are able to obtain Cheng's comparison theorem for quaternionic K\"ahler manifolds,
under a weaker hypothesis on the scalar curvature.

\begin{theorem}\label{c2}
Let $M^{4m}$ be a complete noncompact quaternionic K\"ahler manifold, of real dimension $4m$, such
that the scalar curvature of $M$ has a lower bound given by
$$
\emph{S}_M\geq -16m(m+2).
$$
Then, for each $p >1$, the bottom of the spectrum of the $p$-Laplacian
must satisfy the upper bound
$$
\lambda_{1,p}(M)\leq \frac{2^p (2m+1)^p}{p^p}.
$$
Moreover, this estimate is sharp as equality is a achieved by the quaternionic hyperbolic space form
 $\mathbb{QH}^{4m}$.
\end{theorem}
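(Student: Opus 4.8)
The plan is to run the Li--Wang strategy behind Theorem \ref{c1}, with the complex structure replaced by the quaternionic one. Put $\lambda:=\lambda_{1,p}(M)$; if $\lambda=0$ there is nothing to prove, so assume $\lambda>0$. The variational characterization of $\lambda$ gives $\lambda\int_M|u|^p\le\int_M|\na u|^p$ for all $u\in W^{1,p}_0(M)$, hence for every $u\in W^{1,p}_0(E)$ on any end $E$; this is precisely the Sobolev-type inequality \eqref{sobolev} with $q=p$ and $C=1/\lambda$. Thus Theorem \ref{ends} applies to every end and shows that each one is either of finite volume or $p$-hyperbolic. If all ends had finite volume then $M$ itself would, and testing the Rayleigh quotient with $\varphi_R$ equal to $1$ on $B_R$ and dropping linearly to $0$ on $B_{R+1}\setminus B_R$ would give $\lambda=\lambda_{1,p}(M)\le\liminf_{R\to\infty}\vol(B_{R+1}\setminus B_R)/\vol(B_R)=0$, a contradiction. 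So $M$ carries a $p$-hyperbolic end $E$, and since $W^{1,p}_0(E)\subset W^{1,p}_0(M)$ we have $\lambda_{1,p}(E)\ge\lambda>0$.

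The next step is to trap the volume growth of $E$ between two exponential rates. For the lower bound I would use the volume estimates of Buckley--Koskela \cite{BK} --- the $p$-Laplacian counterpart of the Li--Wang estimate \cite{LW01} on the growth of $2$-hyperbolic ends with positive spectrum --- which, for a $p$-hyperbolic end carrying the Poincar\'e inequality $\lambda\int_E|u|^p\le\int_E|\na u|^p$, produce a constant $c_1>0$ with
\begin{equation*}
\vol(E_R)\ge c_1\,e^{p\lambda^{1/p}R}\qquad\text{for all large }R.
\end{equation*}
For the upper bound I would perform a Laplacian comparison for the distance function $r$ from a fixed point, adapted to the quaternionic K\"ahler geometry as in Kong--Li--Zhou \cite{KLZ}. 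Since $M^{4m}$ is quaternionic K\"ahler it is Einstein, with $\ric=\frac{S_M}{4m}\,g$, and the rigid algebraic structure of its curvature tensor makes the quaternionic sectional curvature $Q(X)=\frac{1}{3}\sum_{i=1}^{3}K(X,J_iX)$ identically equal to the constant $\nu=\frac{S_M}{4m(m+2)}$, which is $\ge-4$ by hypothesis. Consequently the three radial sectional curvatures along the (parallel) quaternionic span of $\na r$ sum to $3\nu\ge-12$, while the remaining $4m-4$ radial sectional curvatures sum to $\ric(\na r,\na r)-3\nu=(m-1)\nu\ge-(4m-4)$. Running the Riccati inequality separately on the $3$-dimensional quaternionic span of $\na r$ and on its $(4m-4)$-dimensional orthogonal complement then yields, in the barrier sense,
\begin{equation*}
\De r\le 6\coth(2r)+(4m-4)\coth(r),
\end{equation*}
whence $\vol(B_R)\le c_2\,e^{(4m+2)R}$ for all $R$.

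Combining the two estimates, $c_1\,e^{p\lambda^{1/p}R}\le\vol(E_R)\le\vol(B_R)\le c_2\,e^{(4m+2)R}$ for $R$ large, so $p\lambda^{1/p}\le 4m+2=2(2m+1)$, that is
\begin{equation*}
\lambda_{1,p}(M)\le\frac{2^p(2m+1)^p}{p^p}.
\end{equation*}
Sharpness is attained by $M=\mathbb{QH}^{4m}$: there $S_M=-16m(m+2)$, all of the above inequalities become equalities, the volume grows exactly like $e^{(4m+2)R}$, and the standard test function $e^{-\alpha r}$ computation (as for real hyperbolic space) gives $\lambda_{1,p}(\mathbb{QH}^{4m})=\left(\frac{4m+2}{p}\right)^{p}$.

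I expect the main obstacle to be the Laplacian comparison, precisely because only the \emph{scalar} curvature is controlled: Bishop--Gromov applied to the Einstein condition alone yields only the non-sharp exponential rate $2\sqrt{(4m-1)(m+2)}$, which exceeds $2(2m+1)$ once $m\ge2$, so one must genuinely exploit the rigid algebraic form of the quaternionic K\"ahler curvature tensor to split off the quaternionic directions and recover the sharp constant. A minor additional point is to align the Buckley--Koskela lower bound with the comparison upper bound on one fixed end beyond its compact core, which is routine once Theorem \ref{ends} has isolated a $p$-hyperbolic end.
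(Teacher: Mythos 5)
Your main estimate follows the paper's route almost exactly: positivity of $\lambda_{1,p}(M)$ turns the variational characterization into the case $q=p$ of inequality \eqref{sobolev}, Theorem \ref{ends} (together with excluding finite volume, which the paper does via Lemma \ref{poli} and you do by an equivalent annulus test-function argument) forces $p$-hyperbolicity/infinite volume, Buckley--Koskela \cite{BK} then gives $V(r)\ge C_0\,e^{p\lambda_{1,p}(M)^{1/p}r}$, and the quaternionic K\"ahler comparison gives $V(r)\le C\,e^{2(2m+1)r}$, whence $\lambda_{1,p}(M)\le\bigl(2(2m+1)/p\bigr)^p$. The only structural difference is that the paper simply cites \cite{KLZ} for the volume upper bound, while you sketch the comparison yourself; your curvature bookkeeping is correct (the average $\frac13\sum_\alpha K(X,J_\alpha X)$ is pinned down by the scalar curvature, the remaining radial curvatures by the Einstein condition), but the honest implementation is via the index form/second variation with weights $\sinh(2t)/\sinh(2r)$ on the quaternionic directions and $\sinh(t)/\sinh(r)$ on the rest, as in \cite{KLZ}; a ``Riccati inequality run separately on the two subbundles'' does not literally split, since $\mathrm{Hess}\,r$ need not preserve that decomposition. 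This is a presentational caveat rather than a fatal error, since you defer to \cite{KLZ}.

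The genuine gap is in the sharpness claim. A cut-off of $e^{-\alpha r}$ only produces an \emph{upper} bound for $\lambda_{1,p}(\mathbb{QH}^{4m})$, and that upper bound is already supplied by the main estimate applied to $M=\mathbb{QH}^{4m}$; equality requires the matching \emph{lower} bound $\lambda_{1,p}(\mathbb{QH}^{4m})\ge\bigl(2(2m+1)/p\bigr)^p$, which no test-function computation can deliver. The paper closes this with Lemma \ref{L3} (the estimate $\lambda_{1,p}(\Omega)\ge c(\Omega)^p/p^p$ of \cite{LMS}) applied to $X=\nabla r$, using $\Delta^{\mathbb{QH}}r=6\coth(2r)+4(m-1)\coth(r)\ge 2(2m+1)$ on every bounded domain and then exhausting; equivalently one can run a McKean-type integration-by-parts argument with the same Laplacian lower bound. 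You need to add such an argument (or an explicit citation for the value of $\lambda_{1,p}$ of $\mathbb{QH}^{4m}$) for the sharpness part to stand.
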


\medskip

\begin{remark}
We can apply the techniques above to extend the Cheng's comparison theorem of Matei (\cite{M}) for $p>1$.
The Theorems \ref{c1} and \ref{c2} can be obtained by using a $p$-version of Brooks' theorem described
in \cite{LMS} provided the volume of $M$ is infinity.

\end{remark}

Below we provide a unified proof of Theorems \ref{c1} and \ref{c2}.
\medskip

Without loss of generality, we assume that $\lambda_{1,p}(M)$ is positive.
By Theorem \ref{ends} and Lemma \ref{poli} we have that $M$ is $p$-hyperbolic.
Now,  by Theorem 0.1 in \cite{BK} we obtain
$$
V(r) \geq C_0 \exp(p\lambda_{1,p}(M)^{1/p}r),
$$
for all $r \gg 1$ and some $C_0>0$.

\medskip
We point out that our hypotheses on the curvature imply volume growth estimates for
geodesic balls. Namely, $V(r )\leq C\exp(ar)$, where $a = 4m$ in Theorem \ref{c1}
(see \cite {LW05}) and  $a= 2(2m+1)$ in Theorem \ref{c2} (see \cite{KLZ}).

\medskip
Therefore, we get
$$
C_0 \exp(p\lambda_{1,p}(M)^{1/p}r)\leq C\exp(ar),
$$
for all $r\gg1$.
i.e.,
$$
\lambda_{1,p}(M)^{1/p}\leq \dfrac{1}{pr} \ln\left(\dfrac{C}{C_0}\right)+\dfrac{a}{p}.
$$

Letting $r\rightarrow\infty$, we obtain
$$
\lambda_{1,p}(M)\leq \left(\dfrac{a}{p}\right)^p.
$$
In particular we have
\begin{equation}\label{estimateabove}
\lambda_{1,p}(\mathbb{CH}^{2m})\leq \left(\dfrac{4m}{p}\right)^p \qquad\mbox{and}\qquad \lambda_{1,p}(\mathbb{QH}^{4m})\leq \left(\dfrac{2(2m+1)}{p}\right)^p.
\end{equation}

To proof the equality in the space form case we use Theorem 1.1 of
\cite{LMS} applied to the gradient of distance function. Precisely

\medskip
\begin{lemma}[Theorem 1.1 of \cite{LMS}] \label{L3}
Let $\Omega\subset M$ be a domain with compact closure and $\partial \Omega \neq \emptyset$, in 
a Riemannian manifold, $M$. Then
\begin{equation}\label{lms-lema}
\lambda_{1,p}(\Omega)\ge \frac{c(\Omega)^p}{p^p},
\end{equation}
where $c(\Omega)$ is the constant given by
\begin{equation*}
c(\Omega):= \sup\left\{ \frac{\inf_\Omega \mathrm{div} X}{\|X\|_\infty} ;\,\,\, X \in \mathfrak{X}(\Omega)\right\}.
\end{equation*}
Here $\mathfrak{X} (\Omega)$ denotes the set of all smooth vector fields, $X$, on $\Omega$ 
with sup norm $\|X\|_\infty =\sup_\Omega \|X\|<\infty$ (where $\|X\|=g(X,X)^{1/2}$) and $\inf_\Omega \mathrm{div} X>0 $.  
\end{lemma}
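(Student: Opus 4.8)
\textbf{Proof plan for Lemma \ref{L3} (Theorem 1.1 of \cite{LMS}).}
The plan is to estimate the Rayleigh quotient directly using the candidate vector fields and integration by parts. Fix a vector field $X\in\mathfrak{X}(\Omega)$ as in the statement, so that $\|X\|_\infty<\infty$ and $\inf_\Omega\dv X>0$; set $\de:=\inf_\Omega\dv X$. For an arbitrary $u\in C_0^\infty(\Omega)$ we first handle $|u|^p$. The key identity is
\begin{equation*}
\int_\Omega |u|^p\,\dv X = -\int_\Omega \langle \nabla |u|^p, X\rangle = -p\int_\Omega |u|^{p-1}\,\mathrm{sgn}(u)\,\langle \nabla u, X\rangle,
\end{equation*}
which is valid because $|u|^p u$-type products vanish near $\partial\Omega$; here one uses that $\nabla|u|^p = p|u|^{p-2}u\,\nabla u$ holds a.e.\ even though $|u|^p$ is only $C^1$ when $p>1$, approximating by $(u^2+\varepsilon^2)^{p/2}$ if one wants to be careful. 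From the lower bound $\dv X\ge\de$ and the Cauchy--Schwarz estimate $|\langle\nabla u,X\rangle|\le\|X\|_\infty|\nabla u|$ we obtain
\begin{equation*}
\de\int_\Omega |u|^p \le p\,\|X\|_\infty\int_\Omega |u|^{p-1}|\nabla u|.
\end{equation*}

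Next I would apply H\"older's inequality to the right-hand side with exponents $p/(p-1)$ and $p$:
\begin{equation*}
\int_\Omega |u|^{p-1}|\nabla u| \le \left(\int_\Omega |u|^p\right)^{(p-1)/p}\left(\int_\Omega |\nabla u|^p\right)^{1/p}.
\end{equation*}
Combining the two displays and dividing by $\left(\int_\Omega|u|^p\right)^{(p-1)/p}$ (which is legitimate since $u\not\equiv 0$) yields
\begin{equation*}
\de\left(\int_\Omega |u|^p\right)^{1/p} \le p\,\|X\|_\infty\left(\int_\Omega |\nabla u|^p\right)^{1/p},
\end{equation*}
hence, raising to the $p$-th power,
\begin{equation*}
\frac{\int_\Omega |\nabla u|^p}{\int_\Omega |u|^p} \ge \frac{\de^p}{p^p\,\|X\|_\infty^p} = \frac{1}{p^p}\left(\frac{\inf_\Omega\dv X}{\|X\|_\infty}\right)^p.
\end{equation*}
Taking the infimum over $u\in W_0^{1,p}(\Omega)\setminus\{0\}$ (by density of $C_0^\infty(\Omega)$ in $W_0^{1,p}(\Omega)$) gives $\la_{1,p}(\Omega)\ge (\inf_\Omega\dv X)^p/(p^p\|X\|_\infty^p)$ for every admissible $X$, and taking the supremum over all such $X$ produces the claimed inequality \eqref{lms-lema}.

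\textbf{Main obstacle.} The analytic content is light; the only genuine subtlety is the justification of the integration by parts and of the chain rule $\nabla|u|^p=p|u|^{p-2}u\nabla u$ when $1<p<2$, where $|u|^{p-2}$ blows up on the zero set of $u$. I would resolve this by replacing $|u|$ with $u_\varepsilon:=(u^2+\varepsilon^2)^{1/2}$, for which $u_\varepsilon^p$ is smooth and $\nabla u_\varepsilon^p = p\,u_\varepsilon^{p-2}u\,\nabla u$ with $|u_\varepsilon^{p-2}u|\le u_\varepsilon^{p-1}$ uniformly, running the whole argument with $u_\varepsilon$ in place of $|u|$, and then letting $\varepsilon\to 0$ by dominated convergence (using that $u$ is bounded with compact support). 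A second, purely cosmetic, point is that $c(\Omega)$ could a priori be $+\infty$; in that case the inequality is to be read as $\la_{1,p}(\Omega)=+\infty$, which is vacuous for a bounded domain, so one may as well assume $c(\Omega)<\infty$. Everything else is the routine H\"older manipulation above.
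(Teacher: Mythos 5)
Your argument is correct: the integration by parts $\int_\Omega |u|^p\,\mathrm{div}\,X=-\int_\Omega\langle\nabla|u|^p,X\rangle$ for $u\in C_0^\infty(\Omega)$, followed by Cauchy--Schwarz and H\"older and then optimizing over $X$, is exactly the standard proof of this estimate, and your regularization $(u^2+\varepsilon^2)^{p/2}$ adequately handles the chain-rule issue for $1<p<2$. Note that the paper itself gives no proof of this lemma (it is quoted as Theorem 1.1 of \cite{LMS}), and your argument coincides with the divergence-theorem approach of that reference; the only slight inaccuracy is your side remark about $c(\Omega)=+\infty$ --- your own chain of inequalities shows $c(\Omega)\le p\,\lambda_{1,p}(\Omega)^{1/p}<\infty$ for a relatively compact $\Omega$, so that case cannot occur and no extra assumption is needed.
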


\medskip
Now, taking $X=\nabla r$ the gradient of the distance function on $M$, we obtain
$\|X\|=1$ and $\mathrm{div} X= \Delta r$, and consequently $c(\Omega)\ge \inf_{\Omega} \Delta r$.

We point out that, in the space form cases we have

\begin{eqnarray*}
\Delta^{^{\mathbb{CH}}}\, r(x) =2 \coth 2r(x) + 2(2m - 1) \coth r(x)\quad \mbox{on $\mathbb{CH}^{2m}$}
\end{eqnarray*}
and
\begin{eqnarray*}
\Delta^{^{\mathbb{QH}}} r(x) =6 \coth 2r(x) + 4(m - 1) \coth r(x) \quad \mbox{on $\mathbb{QH}^{4m}$}.
\end{eqnarray*}

Thus
\begin{equation*}\label{lambda1estimate}
\inf_\Omega \Delta^{^{\mathbb{CH}}}\, r(x) \ge 4m
\qquad \mbox{and} \qquad 
\inf_\Omega \Delta^{^{\mathbb{QH}}}\, r(x) \ge 2(2m+1)
\end{equation*}
and the result follows from the estimate (\ref{lms-lema}).

\hspace{12cm} $\Box$


\section*{Acknowledgement} The authors wish to thank
Professor S. Pigola for helpful comments about this paper.


\vspace{1cm}


\begin{thebibliography}{99}


\bibitem{BK} S. Buckley, P. Koskela,
{\it Ends of metric measure spaces and Sobolev inequalities}.
Math. Z. {\bf 252} (2006) 275--285.

\bibitem{BM} M. Batista, H. Mirandola, 
{\it Sobolev and isoperimetric inequalities for submanifolds in weighted ambient spaces}. Preprint arXiv:1304.2271.

\bibitem{Ch} S.Y. Cheng,
{\it Eigenvalue comparison theorem and its applications}.
Math. Z. {\bf 36} (1975) 289--297.

\bibitem{CMV} M. P. Cavalcante, H. Mirandola, F. Vitorio, 
{\it The non-parabolicity of infinite volume ends}. Preprint arXiv:1201.6391. To appear in Proc. Amer. Math. Soc.

\bibitem{HS} D. Hoffman, J. Spruck, 
{\it Sobolev and isoperimetric inequalities for Riemannian submanifolds}, Comm. 
Pure. Appl. Math. 27 (1974), 715 Ð 727.


\bibitem{Hu} Y. X. Huang,
{\it  Existence of positive solution for a class of the p-Laplace equations}.
J. Austral. Math. Soc. Ser. B {\bf 36} (1994) 249--264.

\bibitem{GP} J.P.Garcia Azorero, I. Peral Alonso,
{\it Existence and nonuniqueness for the p-Laplacian: Nonlinear eigenvalues}.
Comm. Partial Differential Equations {\bf 12} (1987) 1389--1430.


\bibitem{KLZ} S. Kong, P. Li, D. Zhou,
{\it Spectrum of the Laplacian on quaternionic K\"ahler manifolds}.
J. Diff. Geom. {\bf 78  } (2008) 295--332.


\bibitem{Li} P. Li,
{\it Harmonic functions and applications to complete manifolds. }
XIV School of Differential Geometry, Instituto de Matem‡tica Pura e Aplicada (IMPA),
Rio de Janeiro, 2006.


\bibitem{Li08} P. Li,
{\it Harmonic functions on complete Riemannian manifolds}.
Handbook of geometric analysis, no. 1, 195 -- 227, Adv. Lect. Math. (ALM), 7,
Int. Press, Somerville, MA, 2008.


\bibitem{LW01} P. Li, J. Wang,
{\it Complete manifolds with positive spectrum}.
J. Diff. Geom. {\bf 58  } (2001) 501--534.


\bibitem{LW02}  P. Li, J. Wang,
{\it Minimal hypersurfaces with finite index.}
Math. Res. Lett. {\bf 9} (2002), no. 1, 95--103.


\bibitem{LW05} P. Li, J. Wang,
{\it Comparison theorem for K\"ahler manifolds and positivity of spectrum}.
J. Diff. Geom. {\bf  69} (2005) 43--74.


\bibitem{Lind} P. Lindqvist,
{\it Notes on the $p$-Laplace equation.}
Report. University of Jyv\"askyl\"a Department of Mathematics and Statistics, 102.
University of Jyv\"askyl\"a, 2006. ii+80 pp.



\bibitem{LMS} B. P. Lima, J. F. Montenegro, N. L. Santos,
{\it Eigenvalue estimates for the $p$-Laplace operator on manifolds}.
Nonlinear Anal. {\bf 72} (2010), no. 2, 771--781


\bibitem{Ly} I. Ly,
{\it The first eigenvalue for the p-Laplacian operator}.
J. Inequal. Pure Appl. Math. {\bf 6} (3) (2005) 12 pp.,


\bibitem{M} A. M. Matei,
{\it First eigenvalue for the $p$-Laplace operator}.
Nonlinear Anal. {\bf 39} (2000) 1051--1068.


\bibitem{Mu09} O. Munteanu,
{\it A sharp estimate for the bottom of the spectrum of the Laplacian
on K\"ahler manifolds}.
J. Diff. Geom. {\bf  83} (2009) 163--187.

\bibitem{PST} S. Pigola, A. Setti, M. Troyanov,
{\it The Topology at Infinity of a manifold and $L^{p,q}$-Sobolev Inequalities}.
Preprint arXiv:1007.1761.



\end{thebibliography}
\end{document}